\newcommand{\Q}{\mathcal{Q}}
\newcommand{\eps}{\varepsilon}
\newcommand{\N}{\mathbb{N}}
\newcommand{\R}{\mathbb{R}}
\newcommand{\sph}{\mathcal{L}}
\newtheorem{theorem}{Theorem}[section]
\newtheorem{corollary}[theorem]{Corollary}
\newtheorem{lemma}[theorem]{Lemma}
\newtheorem{proposition}[theorem]{Proposition}
\theoremstyle{definition}
\newtheorem{remark}[theorem]{Remark}
\newtheorem{assumption}[theorem]{Assumption}
\numberwithin{equation}{section}
\begin{document}

\title[Hardy inequalities for Robin Laplacians]
{Hardy inequalities for Robin Laplacians}

\author {Hynek Kova\v{r}\'{\i}k}

\address {Hynek Kova\v{r}\'{\i}k, Dipartimento di Matematica \\ Politecnico di Torino \\Corso Duca degli Abruzzi 24, Torino10129, Italy}

\email {hynek.kovarik@polito.it}

\author {Ari Laptev}

\address {Ari Laptev, Imperial College London\\ Huxley Building, 180 Queen's Gate \\
London SW7 2AZ, UK 
}

\email {a.laptev@imperial.ac.uk}

%%%%%%%

\begin{abstract}
In this paper we establish a Hardy inequality for Laplace operators with Robin boundary conditions. For convex domains, in particular, we show explicitly how the corresponding Hardy weight depends on the coefficient of the Robin boundary conditions. We also study several extensions to non-convex and unbounded domains.  
\end{abstract}

\maketitle

{\bf  AMS 2000 Mathematics Subject Classification:} 47F05, 39B72\\

{\bf  Keywords:}
Robin Laplacian, Hardy inequality \\

%%%%%%%%%%%%%%%%%%%%%%%%%%%%%%%%%%%%%%%%%%

\section{Introduction}

The classical Hardy inequality states that if $n\ge3$ then for any
function $u$ such that  $\nabla u\in L^2(\Bbb R^n)$ it holds
\begin{equation}\label{HClass}
\int_{\Bbb R^n} |\nabla u(x)|^2\, dx \ge \Big(\frac{n-2}{2}\Big)^2 \, \int_{\Bbb R^n} \frac{|u(x)|^2}{|x|^2}\, dx.
\end{equation}
It is well known that the constant $(n-2)^2/4$ in \eqref{HClass} is sharp but not achieved. 
The literature concerning different versions of Hardy's inequalities and their applications is extensive and we are not able to cover it in this paper. We just mention the classical paper of M.Sh.Birman \cite{B} and the books of E.B.Davies \cite{D1,D3} and V.Maz'ya \cite{M}.

A version of Hardy inequalities was considered by E.B. Davies (see for example \cite{D2} or \cite[Sect.5.3.]{D3}) who, in particular, proved that for convex domains $\Omega\subset \mathbb R^n$, $n\ge 2$
\begin{equation}\label{HDavies}
\int_\Omega |\nabla u(x)|^2 \, dx \ge \frac14\, \int_\Omega \frac{|u(x)|^2}{ \delta^2(x)} \, dx,
\qquad u\in H^1_0(\Omega),
\end{equation}
where $\delta$ is  the distance function  to the boundary $\partial\Omega$,
\begin{equation}\label{min}
\delta(x) = {\rm dist}\, (x,\partial\Omega) = \min_{y\in\partial\Omega} |x-y|.
\end{equation}
The $L^p$ version of inequality \eqref{HDavies}, for $p>1$, with the sharp constant was proven in \cite{MS} for $n=2$ and later in \cite{MMP} for any $n$. 

In the paper \cite{BM} H.Brezis and M. Marcus showed that if $\Omega\subset\R^n$, $n\ge 2$ is convex, then the inequality  \eqref{HDavies} could be improved to include the $L^2$-norm:
\begin{equation}\label{hardyBM}
\int_{\Omega} |\nabla u|^2 \,dx \ge \frac{1}{4}\int_{\Omega} \frac{|u|^2}{\delta^2(x)}\, dx
+ C(\Omega) \,\int_{\Omega} |u|^2 dx,
\end{equation}
where $C(\Omega) = c(\rm{diam} \, \Omega)^{-2}$ with some $c>0$.
They also conjectured that $C(\Omega)$ should depend on
the Lebesgue measure of $\Omega$. This conjecture was justified in \cite{HHL} where it was proved that 
in \eqref{hardyBM} $C(\Omega)$ could be chosen as $c|\Omega|^{-2/n}$ with some $c>0$ independent of $\Omega$. This result was later 
generalised to $L^p$-type inequalities  in \cite{T}.

\noindent
If $\Omega$ is not convex then generally speaking the constant in front of the first integral in the right hand side of \eqref{hardyBM} could be less then $1/4$. In 1986 A. Ancona \cite{A} showed using the Koebe one-quarter Theorem, that for a simply-connected planar domain the constant in the Hardy inequality with the distance to the boundary is greater than or equal to $1/16$. In \cite{LS} the authors have considered classes of domains for which there is a stronger version of the Koebe Theorem and which in turns implies better estimates for the constant appearing in the Hardy inequality \eqref{HDavies}.

In \cite{FMT}  S. Filippas,  V.G. Maz'ya and A. Tertikas (see also F.G. Avkhadiev \cite{A}) have obtained
that for convex domains $\Omega$ the constant $C(\Omega)$ in \eqref{hardyBM} could be expressed in terms of the inradius of $\Omega$. Namely if
$$
R_{in} := \sup\{\delta(x) \,:\, x\in\Omega\},
$$
then $C(\Omega) = c^2_0\, R_{in}^{-2}$ with some $c_0>0$.
Recently F.G. Avkhadiev and K.-J. Wirths \cite{AW} have shown that the best possible constant $c_0$  equals 
 the first positive zero  of the function 
$$
J_0(t) - 2t J_1(t) = J_0(t) + 2tJ_0'(t)
$$
where $J_0$ and $J_1$ are the Bessel functions of order $0$ and $1$ respectively.
Note, however, that the results obtained in the papers \cite{FMT}, \cite{A} and  \cite{AW} do not cover the case of non-convex domains whereas it has been showed in \cite{HHL} that the remainder term $c|\Omega|^{-2/d}$ survives even for non-convex domains. 

\medskip

In this paper we are not going to consider the classical Dirichlet-Laplacian, but the  so-called Robin-Laplacian generated by the quadratic form
\begin{equation} \label{form}
\Q_\sigma[u]= \int_\Omega |\nabla u(x)|^2 \,dx + \int_{\partial\Omega} \sigma(y)\, |u(y)|^2\, d\nu(y),
\end{equation}
where $d\nu$ denotes the surface measure on $\partial\Omega$ and $\sigma$ is a measurable function which defines the boundary conditions. If $\sigma\in L^\infty(\partial\Omega)$ is non-negative and such that $\sigma>0$ on a part of the boundary of non-zero surface measure and if $\Omega$ is bounded and regular enough, see section \ref{sect-prelim}, then $\Q_\sigma[\cdot]$ is positive definite on $H^1(\Omega)$ and therefore must satisfy some Hardy type inequality with a positive integral weight. 

Our aim is to find out how such a Hardy inequality depends on the function $\sigma$ and on the geometry of $\Omega$. We will deal with several types of domains in $\R^n$. For convex domains we establish a Hardy inequality with an explicit expression for the associated integral weight, Theorem \ref{main}. A generalisation to non-convex domains is discussed in section \ref{sect-general}, see Theorem \ref{pseudo-ndim} and Corolary \ref{cor-general}. In the closing section \ref{sect-unb} we treat an example of an unbounded non-convex domain.

%%%%%%%%%%%%%%%%%%%%%%%%%%%%%%%%%%%%%%%%%%%%%%%%%%%%%%%%%

\section{Preliminaries} \label{sect-prelim}

\noindent
Let $\Omega\subset\R^n$ be an open domain. We will need the following hypothesis.  

\begin{assumption} \label{ass-basic}
$\Omega$ satisfies the strong local Lipschitz condition. This means that each point $y\in\partial\Omega$ has a neighbourhood $U_y$ such that $U_y\cap \partial\Omega$ is the graph of a Lipschitz continuous function with the Lipschitz constant independent of $y$, see \cite[Chap.4]{Ad}.   
\end{assumption}

\noindent
If in addition to Assumption \ref{ass-basic} we suppose that $\Omega$ is bounded and that 
$\sigma\in L^\infty(\partial\Omega)$, then in view of the trace inequality, see e.g. \cite[Sect.7.5]{Ad}, 
it follows that
\begin{equation} \label{trace}
\Q_\sigma[u] \,  \leq c\, \|u\|_{H^1(\Omega)} 
\end{equation}
for some $c$ and all $u\in H^1(\Omega)$. Hence the quadratic form $\Q_\sigma[u]$
defined on $H^1(\Omega)$ is closed. The unique self-adjoint operator generated by $\Q_\sigma$ is then the Robin-Laplacian which formally satisfies the boundary conditions
$$
 \frac{\partial u}{\partial \eta_y}(y) + \sigma(y) u(y) =0 , \qquad y\in\partial\Omega,
$$
where $\eta_y$ denotes the unit outer normal vector at $y\in\partial\Omega$. 

Finally, let us denote by $S\subset\Omega$ the subset of points in $\Omega$ for
which there exist at least two points $y_1, y_2 \in \partial\Omega$ where
the minimum in \eqref{min} is achieved. Usually this set is called the
{\em singular} set of $\Omega$ and it is known that its Lebesgue
measure is zero (see for example \cite{LN}). We introduce the 
projection $p:\, \Omega\setminus S \to
\partial\Omega $ by
\begin{equation} \label{projection}
p(x) :=  y\in \partial\Omega\ : \, \delta(x) = |x-y|, \quad x\in \Omega\setminus S.
\end{equation} 

\smallskip

%%%%%%%%%%%%%%%%%%%%%%%%%%%%%%%%%%%%%%%%%%%%%%%%%%%%%%%%%%

\section{Convex domains} \label{sect-convex}

%%%%%%%%%%%%%%%%%%%%%%%%%%%%%%%%%%%%%%%%%%%%%%%%
\subsection{Bounded convex domains}

\noindent
Our main result is the following 

\begin{theorem}\label{main} 
Let $\Omega\subset\R^n$ be open bounded and convex. Then for any $0\leq \sigma \in L^\infty(\partial\Omega)$ and all $u\in H^1(\Omega)$ it holds
\begin{align} \label{hardy-main}
& \int_{\Omega} |\nabla u(x)|^2 \, dx +\! \int_{\partial\Omega}\! \sigma(y) \, |u(y)|^2\, d\nu(y) 
\,  \geq \,  \frac14 \int_{\Omega} \Big
(\delta(x)+\frac{1}{2\sigma(p(x))}\Big)^{-2} |u(x)|^2\, dx \,  +  \\
%& \\
& \qquad +\frac14 \int_{\Omega}  \Big (R_{in}+\frac{1}{2\sigma(p(x))}\Big)^{-2} |u(x)|^2\, dx
+ \frac 12 \int_{\partial\Omega}\Big(R_{in} +\frac{1}{2\sigma(y)}\Big)^{-1} \, |u(y)|^2 \, d\nu(y). \nonumber
\end{align}
\end{theorem}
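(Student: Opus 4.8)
The plan is to use the vector-field (``ground state substitution'') method, tuned so that the Robin term on $\partial\Omega$ is absorbed exactly. For any real Lipschitz vector field $V$ on $\Omega$ and any $u\in H^1(\Omega)$, I would start from the trivial bound $\int_\Omega |\nabla u + V\,u|^2\,dx\ge 0$, expand it using $V\cdot\mathrm{Re}(\bar u\,\nabla u)=\tfrac12 V\cdot\nabla|u|^2$, and integrate the cross term by parts. Recalling that $\eta$ is the outer unit normal, this produces the basic inequality
\begin{equation*}
\Q_\sigma[u]\ \ge\ \int_\Omega\big(\operatorname{div}V-|V|^2\big)\,|u(x)|^2\,dx+\int_{\partial\Omega}\big(\sigma-V\cdot\eta\big)\,|u|^2\,d\nu .
\end{equation*}
Everything then reduces to choosing $V$ so that the interior integrand reproduces the two bulk weights in \eqref{hardy-main} and the boundary integrand reproduces the last term.

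Guided by the one-dimensional half-line model, I would take $V=\phi\big(\delta(x)\big)\,\nabla\delta(x)$ with
\begin{equation*}
\phi(t)=\phi_{\beta}(t):=\frac{1}{2(R_{in}+\beta)}-\frac{1}{2(t+\beta)},\qquad \beta=\beta(p(x)):=\frac{1}{2\sigma(p(x))} .
\end{equation*}
Since $\beta\ge 1/(2\|\sigma\|_\infty)>0$, the function $\phi$ is bounded, so the Robin term regularises the would-be singularity of the Dirichlet Hardy weight. The computation rests on three elementary consequences of convexity: $|\nabla\delta|=1$ a.e.; $\delta$ is concave, hence $\Delta\delta\le 0$ (as a measure); and the nearest-point projection $p(x)$ is \emph{constant along the inward normal ray}, i.e. along $\nabla\delta$, so that $\nabla\big(\beta(p(x))\big)\cdot\nabla\delta=0$. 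Writing $\operatorname{div}V=\nabla\phi\cdot\nabla\delta+\phi\,\Delta\delta$ and expanding $\nabla\phi=\partial_t\phi\,\nabla\delta+\partial_\beta\phi\,\nabla(\beta\circ p)$, the $\beta$-dependence drops out upon dotting with $\nabla\delta$, leaving $\nabla\phi\cdot\nabla\delta=\partial_t\phi=\phi'(\delta)$. Since $|V|^2=\phi(\delta)^2$ and $\phi\le 0$ on $[0,R_{in}]$ while $\Delta\delta\le 0$, I obtain
\begin{equation*}
\operatorname{div}V-|V|^2=\phi'(\delta)-\phi(\delta)^2+\phi(\delta)\,\Delta\delta\ \ge\ \phi'(\delta)-\phi(\delta)^2 .
\end{equation*}

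The remaining work is the one-dimensional Riccati computation, which I would carry out directly:
\begin{equation*}
\phi'(t)-\phi(t)^2=\frac{1}{4(t+\beta)^2}+\frac{1}{2(R_{in}+\beta)(t+\beta)}-\frac{1}{4(R_{in}+\beta)^2}\ \ge\ \frac{1}{4(t+\beta)^2}+\frac{1}{4(R_{in}+\beta)^2},
\end{equation*}
where the last inequality uses $t+\beta\le R_{in}+\beta$, valid because $0\le\delta(x)\le R_{in}$. Evaluated at $t=\delta(x)$ this is exactly the sum of the two interior weights of \eqref{hardy-main}. For the boundary integrand, on $\partial\Omega$ one has $\nabla\delta=-\eta$, so $V\cdot\eta=-\phi_\beta(0)$, and the choice $\beta=1/(2\sigma)$ forces $1/(2\beta)=\sigma$ to cancel, leaving $\sigma-V\cdot\eta=\sigma+\phi_\beta(0)=\tfrac12\big(R_{in}+\tfrac{1}{2\sigma}\big)^{-1}$, which is precisely the asserted boundary weight. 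Substituting these three evaluations into the basic inequality yields \eqref{hardy-main}.

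The main obstacle is regularity, and I expect it to be where the real care is needed. The distance $\delta$ is only Lipschitz and $\Delta\delta$ is a signed measure, while $\sigma\in L^\infty(\partial\Omega)$ means $\nabla(\beta\circ p)$ is not classically defined. I would resolve this in one of two ways: either (i) approximate $\Omega$ by smooth convex domains and $\sigma$ by continuous functions, performing the integration by parts in the smooth setting and passing to the limit, the sign $\Delta\delta\le 0$ guaranteeing that the discarded measure term has the favourable sign; or, more robustly, (ii) foliate $\Omega\setminus S$ by the normal segments and use the coarea formula to write $\int_\Omega|\nabla u|^2$ as an integral over $\partial\Omega$ of one-dimensional Dirichlet integrals along the rays, discarding the nonnegative tangential contribution and using the monotonicity of the Jacobian (which encodes $\Delta\delta\le 0$). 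On each ray $\beta$ is then a genuine constant, so the sharp one-dimensional Robin–Hardy inequality applies fibrewise and avoids differentiating $\sigma$ altogether.
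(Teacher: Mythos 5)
Your core computation is correct and is genuinely different from the paper's argument. I checked the three key evaluations: the Riccati identity $\phi'-\phi^2=\frac{1}{4(t+\beta)^2}+\frac{1}{2(R_{in}+\beta)(t+\beta)}-\frac{1}{4(R_{in}+\beta)^2}$, the use of $\delta\le R_{in}$ to get the two bulk weights, and the boundary cancellation $\sigma+\phi_\beta(0)=\frac12\big(R_{in}+\frac{1}{2\sigma}\big)^{-1}$; all three terms of \eqref{hardy-main} come out exactly. The paper proceeds quite differently: it never integrates by parts in $n$ dimensions. It proves the one-dimensional Lemma \ref{lem1} (which is essentially your vector-field argument in disguise, with $f(t)=-(t+\frac{1}{2\sigma})^{-1}$), then approximates $\Omega$ from inside by convex polytopes, on which each face region $P_j$ is fibred by \emph{parallel} normal segments -- so no Jacobian and no singular-set contributions ever arise -- and finally passes to the limit three times (polytopes to $\Omega$, continuous $\sigma$ to $L^\infty$, $C^1(\overline\Omega)$ to $H^1$). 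Your route is more direct and exhibits the optimizing field explicitly; its price is that all the low regularity of $\delta$, $p$ and $\sigma\circ p$ must be confronted simultaneously, which is exactly where your proposal is incomplete.

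The gap is concentrated in your option (i). The discarded singular term is \emph{not} just $\phi\,\Delta\delta$. The field $V=\phi(\delta,\beta\circ p)\nabla\delta$ jumps across the singular set $S$ for two independent reasons: $\nabla\delta$ jumps, and $\beta\circ p$ jumps (the two nearest boundary points carry different values of $\sigma$). Writing $\xi$ for a unit normal to $S$, the singular part of $\mathrm{div}\,V$ on $S$ is
\begin{equation*}
\big[(V^+-V^-)\cdot\xi\big]
=\phi^+\big[(\nabla\delta^+-\nabla\delta^-)\cdot\xi\big]
+\big(\phi^+-\phi^-\big)\,\nabla\delta^-\cdot\xi .
\end{equation*}
The first term is $\ge 0$ (both factors are $\le0$, by concavity of $\delta$ and $\phi\le0$), but the second has no definite sign: it depends on how $\sigma$ differs at the two projection points. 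Smoothing the domain does not remove $S$, and continuity of $\sigma$ does not make $\sigma\circ p$ continuous across $S$; so option (i) as stated only works for constant $\sigma$. Option (ii), fibring $\Omega\setminus S$ by normal rays, is the correct fix precisely because $\beta$ is constant on each ray and nothing is ever differentiated across $S$; your one-dimensional computation then closes the argument, since the extra terms $\int\phi\, J'\,|u|^2\,dt$ and $-\phi(t_y)J(t_y)|u(t_y)|^2$ produced by a nonincreasing Jacobian $J$ are both nonnegative. But as written this is a plan rather than a proof: you still owe (a) the change of variables $(y,t)\mapsto y+t\,n(y)$ with $J(y,t)$ nonincreasing in $t$ for a general convex body, which for non-$C^2$ boundaries requires Alexandrov-type regularity or an approximation by smooth convex domains, and (b) the limiting arguments needed to reach $\sigma\in L^\infty$ and $u\in H^1$. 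The paper's polytope approximation is exactly the device that makes $J\equiv1$ and renders $S$ harmless, which is why it sidesteps both of these debts.
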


\medskip

\noindent Proof of Theorem \ref{main} is given after Lemma \ref{lem1}.

\medskip

\begin{remark}
Note  that $p(x)$ is defined on $\Omega\setminus S$ and therefore
almost everywhere in $\Omega$.
\end{remark}

\begin{remark}
Assumption \ref{ass-basic} is satisfied for all bounded convex domains. 
\end{remark}

\begin{remark}
Let us discuss the optimality of the first term on the right hand side of inequality \eqref{hardy-main}. Assume that  
\begin{equation} \label{ass}
\int_{\Omega} |\nabla u|^2\, dx + \sigma\!\int_{\partial\Omega} u^2\,
d\nu  \, \geq \, c \int_\Omega\,
\frac{|u|^2}{(\delta+\frac{1}{2\sigma})^2}\, \, dx.
\end{equation}
holds true for a constant $\sigma>0$ and some $c$. Let $\Omega\subset\R^n$
be a ball of radius $R$ centered in the origin and consider the
function
$$
u(x) = \Big(R+\frac{1}{2\sigma}\, -|x|\Big)^{1/2}\, , \qquad x\in\Omega.
$$
By inserting $u$ into \eqref{ass} and letting $R\to\infty$ we get
\begin{align*}
\left(1-4c\right)\, R^{n-1}\, \log(1+2\sigma R) + O(R^{n-1}) \geq 0,
\qquad R\to\infty.
\end{align*}
This shows that $c\leq 1/4$ and hence the constant $1/4$ on the firs line of \eqref{hardy-main} is sharp. Another way to see this is to look at the limit $\sigma \to\infty$, cf. Corollary \ref{Dirichlet} below, in which we recover the classical Hardy inequality where the constant $1/4$ is optimal. 

However, it turns out that for a fixed convex domain $\Omega$ and certain values of $\sigma$, with $\sigma$ being equal to a constant,  it is possible to obtain a better weight than the one given by the first term in \eqref{hardy-main}, 
see Remark \ref{added} for more detail.
\end{remark}

\medskip

\noindent
Theorem \ref{main} can be applied also in situations with Dirichlet boundary condition on some part of the boundary. In order to formulate the corresponding statement, we introduce the space 
$$
C^1_{0,\Gamma}(\overline{\Omega}) := \big\{ u\in C^1(\overline{\Omega})\, :\, u |_{\Gamma} \, =0 , \ \Gamma\subset\partial\Omega\big\}.
$$ 
We then have 

\begin{corollary} \label{partial-Dirichlet}
Let $\Omega$ be as in Theorem \ref{main} and let $\Gamma\subset\partial\Omega$ be closed. Then for all $u\in C^1_{0,\Gamma}(\overline{\Omega})$ and any $\tilde\sigma\in L^\infty(\partial\Omega\setminus\Gamma)$ inequality \eqref{hardy-main} holds true with 
$$
\sigma(y) = \left\{
\begin{array}{l@{\qquad}l}
 \tilde\sigma(y)    &      {\rm if \ } \  y\in\  \partial\Omega\setminus\Gamma ,  \\
 +\infty     &   {\rm if \ } \  y\in\Gamma .
\end{array}
\right. 
$$
\end{corollary}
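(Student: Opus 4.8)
The plan is to realise the Dirichlet condition on $\Gamma$ as a limit of large but finite Robin parameters, so that Theorem \ref{main} applies directly to each member of an approximating sequence. Concretely, for $j\in\N$ I define
$$
\sigma_j(y) = \begin{cases} \tilde\sigma(y) & y\in\partial\Omega\setminus\Gamma,\\ j & y\in\Gamma.\end{cases}
$$
Since $\Gamma$ is closed, hence measurable, each $\sigma_j$ is a non-negative function in $L^\infty(\partial\Omega)$, and any $u\in C^1_{0,\Gamma}(\overline{\Omega})\subset H^1(\Omega)$ is admissible because $\Omega$ is bounded. Thus Theorem \ref{main} yields inequality \eqref{hardy-main} with $\sigma$ replaced by $\sigma_j$, for every $j$.

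First I would observe that the left-hand side is independent of $j$. Indeed, the boundary integral splits as $\int_{\partial\Omega\setminus\Gamma}\tilde\sigma\,|u|^2\,d\nu + j\int_\Gamma|u|^2\,d\nu$, and the second term vanishes because $u|_\Gamma=0$. Hence for every $j$ the left-hand side equals exactly the quantity appearing on the left of \eqref{hardy-main} under the stated convention, where the contribution of $\Gamma$ is simply absent.

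The core of the argument is to pass to the limit $j\to\infty$ on the right-hand side. For the first two volume integrals the integrands $\big(\delta(x)+\tfrac{1}{2\sigma_j(p(x))}\big)^{-2}$ and $\big(R_{in}+\tfrac{1}{2\sigma_j(p(x))}\big)^{-2}$ increase monotonically in $j$: on the set $\{p(x)\in\Gamma\}$ the term $\tfrac{1}{2\sigma_j(p(x))}=\tfrac{1}{2j}$ decreases to $0$, while on $\{p(x)\in\partial\Omega\setminus\Gamma\}$ the integrand is constant. By the monotone convergence theorem they tend to the integrals with weights $\big(\delta(x)+\tfrac{1}{2\sigma(p(x))}\big)^{-2}$ and $\big(R_{in}+\tfrac{1}{2\sigma(p(x))}\big)^{-2}$, with $\tfrac{1}{2\sigma(p(x))}=0$ whenever $p(x)\in\Gamma$, matching the $\sigma=+\infty$ convention. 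The third, boundary term is in fact constant in $j$: its restriction to $\Gamma$ vanishes since $u|_\Gamma=0$, and on $\partial\Omega\setminus\Gamma$ the parameter is the fixed $\tilde\sigma$. Since the left-hand side dominates the $j$-th right-hand side for every $j$ and the right-hand sides converge monotonically upward, letting $j\to\infty$ gives $\text{LHS}\ge\sup_j\text{RHS}_j=\lim_j\text{RHS}_j$, which is the asserted inequality.

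The only point requiring care — and the natural place for slips — is the bookkeeping of the projection $p$ relative to $\Gamma$: one must check that $\{x\in\Omega\setminus S:\ p(x)\in\Gamma\}$ is measurable and that the pointwise monotone limit of the weights is genuinely the weight dictated by the $\sigma=+\infty$ convention. Measurability follows from closedness of $\Gamma$ together with measurability of $p$ away from the null set $S$, and the identification of the limiting weights is immediate once $\tfrac{1}{2\cdot\infty}$ is read as $0$. Since no uniformity in $u$ or in $\Omega$ is needed, once these measurability remarks are in place the monotone convergence step closes the proof.
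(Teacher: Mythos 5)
Your proof is correct and follows exactly the same route as the paper: approximate the Dirichlet condition by the Robin parameters $\sigma_j$ equal to $j$ on $\Gamma$, apply Theorem \ref{main} for each $j$, and pass to the limit by monotone convergence. The additional details you supply (constancy of the left-hand side since $u|_\Gamma=0$, monotonicity of the weights, measurability of $\{p(x)\in\Gamma\}$) are sound elaborations of the step the paper compresses into one sentence.
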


\begin{proof}
Let $u\in C^1_{0,\Gamma}(\overline{\Omega})$ and define the sequence of functions $\sigma_n: \partial\Omega \to \R_+$ by
$$
\sigma_n(y) = \left\{
\begin{array}{l@{\qquad}l}
 \tilde\sigma(y)    &       {\rm if \ } \   y\in\  \partial\Omega\setminus\Gamma ,  \\
n     &    {\rm if \ } \  y\in\Gamma 
\end{array}
\right. \qquad n\in\N.
$$
Then $\sigma_n\in L^\infty(\partial\Omega)$ for each $n\in\N$ and therefore we can apply Theorem \ref{main}. The statement then follows from the monotone convergence theorem by letting $n \to\infty$.
\end{proof}

\noindent When $\Gamma=\partial\Omega$, then the above Corollary yields an improvement of the classical Hardy inequality \eqref{HDavies} for Dirichlet Laplacians, see \cite{A1,A2,AW,BM,FMT,HHL} for more results in this direction. 

\begin{corollary}\label{Dirichlet}
Let $\Omega$ be as in Theorem \ref{main}. Then for all $u\in H^1_0(\Omega)$ it holds
\begin{align} \label{hardy-dirichlet}
& \int_{\Omega} |\nabla u(x)|^2 \, dx\,  \geq \,  \frac14 \int_{\Omega} 
\frac{|u(x)|^2}{\delta^2(x)} \, \, dx  + \frac{1}{4\, R_{in}^2}  \int_{\Omega} |u(x)|^2  \, dx.
\end{align}
\end{corollary}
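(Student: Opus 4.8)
The plan is to derive Corollary \ref{Dirichlet} as the limiting case $\sigma \to +\infty$ of the main inequality \eqref{hardy-main}. The key observation is that Corollary \ref{partial-Dirichlet} with $\Gamma = \partial\Omega$ already encodes exactly this: imposing the Dirichlet condition on the entire boundary corresponds to setting $\sigma \equiv +\infty$ everywhere. So the cleanest route is to apply Theorem \ref{main} with a constant coefficient $\sigma > 0$ and then let $\sigma \to \infty$ in each of the three terms on the right-hand side.

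First I would take $u \in C^\infty_0(\Omega)$ (or $u \in C^1_{0,\partial\Omega}(\overline{\Omega})$); for such $u$ the boundary integral on the left of \eqref{hardy-main} vanishes since $u|_{\partial\Omega} = 0$, as does the third term on the right, so the inequality reduces to
\begin{equation*}
\int_{\Omega} |\nabla u|^2 \, dx \geq \frac14 \int_{\Omega} \Big(\delta(x)+\tfrac{1}{2\sigma}\Big)^{-2} |u|^2\, dx + \frac14 \int_{\Omega} \Big(R_{in}+\tfrac{1}{2\sigma}\Big)^{-2} |u|^2\, dx
\end{equation*}
for every constant $\sigma > 0$. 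The left-hand side is independent of $\sigma$. On the right-hand side, as $\sigma \to \infty$ the weights increase monotonically: $\big(\delta(x)+\tfrac{1}{2\sigma}\big)^{-2} \nearrow \delta^{-2}(x)$ pointwise a.e.\ in $\Omega$, and $\big(R_{in}+\tfrac{1}{2\sigma}\big)^{-2} \nearrow R_{in}^{-2}$. Passing to the limit by the monotone convergence theorem then yields \eqref{hardy-dirichlet} for this dense class of $u$.

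The final step is to extend the inequality from the dense subspace to all of $H^1_0(\Omega)$ by density, which requires only that both sides are continuous (or at least lower semicontinuous) under the relevant convergence. The left-hand side is continuous in the $H^1_0$ norm, and the second term on the right is continuous in $L^2$; the singular weight $\delta^{-2}$ term is handled by Fatou's lemma, exactly as in the standard proof of \eqref{HDavies}. I do not expect any genuine obstacle here: the substance of the result is entirely contained in Theorem \ref{main}, and the corollary is a routine limiting and density argument. The only point demanding a little care is ensuring the interchange of limit and integral for the $\delta^{-2}$ term near the singular set $S$ and near $\partial\Omega$, but since $|S|=0$ and the monotonicity is in our favour, monotone convergence applies without difficulty.
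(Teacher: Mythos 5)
Your proposal is correct and follows essentially the same route as the paper: the paper obtains Corollary \ref{Dirichlet} as the special case $\Gamma=\partial\Omega$ of Corollary \ref{partial-Dirichlet}, whose proof is exactly your argument of applying Theorem \ref{main} with $\sigma_n\to\infty$ and passing to the limit by monotone convergence, the boundary terms vanishing since $u|_{\partial\Omega}=0$. Your explicit density/Fatou step to pass from the dense class to all of $H^1_0(\Omega)$ is left implicit in the paper but is the standard completion of the argument (note only that the singular set $S$ plays no role once $\sigma$ is constant, since $\sigma(p(x))\equiv\sigma$).
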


\smallskip

\begin{remark}
In \cite{BM} and \cite{HHL} the constant in front of the second term on the right hand side is expressed in terms of the diameter and volume of $\Omega$ respectively. A sharp constant involving the inradius $R_{in}$ is due to \cite{AW}.
\end{remark}

\smallskip

%%%%%%%%%%%%%%%%%%%%%%%%%%%%%%%%%%%%%%%%%%%%%%%%%%%%%%%%%%

%\subsection{Proof of Theorem \ref{main}} \label{subsect-proof} 
\noindent
The key idea of the proof of Theorem \ref{main} is to establish inequality \eqref{hardy-main} first for convex polytops, and then to approximate the domain $\Omega$ by a sequence of such polytops. We start with a one-dimensional result. 

\begin{lemma}\label{lem1}
Let $b>0$ and assume that $u$ belongs to $AC[0,b]$, the space of absolutely continuous functions on $[0,b]$.  Then for any $\sigma\geq 0$ we have
\begin{align} \label{cor2-eq}
\int_0^b |u'(t)|^2 \, dt +  \sigma\, |u(0)|^2 & \, \geq\,  \frac14\,
\int_0^b \Big(\Big (t+\frac{1}{2\sigma}\Big)^{-2} + \Big
(b+\frac{1}{2\sigma}\Big)^{-2}\Big)\, |u(t)|^2\, dt \nonumber \\
&\qquad \qquad + \frac 12\, \Big(b +\frac{1}{2\sigma}\Big)^{-1} \,
|u(0)|^2.
\end{align}
\end{lemma}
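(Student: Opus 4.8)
The plan is to prove \eqref{cor2-eq} by the ground-state/vector-field method, that is, by completing a square against a single well-chosen real weight. I may assume $\sigma>0$, since for $\sigma=0$ the quantity $\frac{1}{2\sigma}$ is infinite and every term on the right-hand side of \eqref{cor2-eq} vanishes, so the inequality reduces to $\int_0^b|u'|^2\,dt\ge0$; I may likewise assume $\int_0^b|u'(t)|^2\,dt<\infty$, as otherwise the left-hand side is infinite. Writing $a=\frac{1}{2\sigma}$ and $L=b+a$, the heart of the argument is the introduction of the vector field
\[
V(t)=\frac{1}{2(t+a)}-\frac{1}{2L},\qquad t\in[0,b],
\]
which is $C^1$ and bounded on $[0,b]$ because $t+a\ge a>0$.

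I would start from the elementary estimate $0\le\int_0^b|u'-Vu|^2\,dt$. Expanding the square, using $2\,\re(\bar u\,u')=(|u|^2)'$, and integrating by parts (which is legitimate since $V\in C^1([0,b])$ and $|u|^2\in AC[0,b]$), one arrives at
\[
\int_0^b|u'|^2\,dt\ \ge\ \big[V|u|^2\big]_0^b-\int_0^b\big(V'+V^2\big)\,|u|^2\,dt .
\]
The proof then reduces to a direct computation of $V'+V^2$ together with the two boundary values. One finds $V(b)=0$, which is precisely the purpose of subtracting the constant $\frac{1}{2L}$, and $V(0)=\sigma-\frac{1}{2L}$ upon using $\frac{1}{2a}=\sigma$, so that $\big[V|u|^2\big]_0^b=-\big(\sigma-\tfrac{1}{2L}\big)|u(0)|^2$.

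The remaining step is to compare the generated weight with the target weight. A routine calculation gives $-\big(V'+V^2\big)=\frac{1}{4(t+a)^2}+\frac{1}{2(t+a)L}-\frac{1}{4L^2}$, and subtracting the desired weight yields
\[
-\big(V'+V^2\big)-\Big(\frac{1}{4(t+a)^2}+\frac{1}{4L^2}\Big)=\frac{1}{2L}\Big(\frac{1}{t+a}-\frac{1}{L}\Big)\ge 0,
\]
where nonnegativity holds exactly because $t\le b$ forces $t+a\le L$; this sign condition is the one genuinely geometric input of the argument. Discarding this nonnegative excess, inserting the boundary values, and transferring $\sigma|u(0)|^2$ to the left, together with $a=\frac{1}{2\sigma}$ and $L=b+\frac{1}{2\sigma}$, reproduces \eqref{cor2-eq} verbatim.

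I expect the only real obstacle to be the discovery of $V$; everything else is mechanical. The summand $\frac{1}{2(t+a)}$ is dictated by the sharp Hardy constant $\tfrac14$ at the Robin endpoint, being the logarithmic derivative of the natural ground state $(t+a)^{1/2}$, while the identity $\frac{1}{2a}=\sigma$ is what encodes the Robin datum; the constant shift $-\frac{1}{2L}$ then has to be tuned so that \emph{simultaneously} the boundary term at $t=b$ is annihilated and a constant weight of the exact size $\frac{1}{4L^2}$ is produced, with the leftover cross term $\frac{1}{2(t+a)L}$ rendered harmless by the sign condition above. An essentially equivalent but longer route first substitutes $u=(t+a)^{1/2}w$ to extract the sharp term with a clean remainder $\int_0^b(t+a)|w'|^2\,dt$ and then applies the constant field $-\frac{1}{2L}$ to that remainder; folding both into the single field $V$ is the cleaner presentation.
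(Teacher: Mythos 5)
Your proof is correct and is essentially the paper's argument in a different guise: the paper derives the same master inequality (its \eqref{inter}) via integration by parts, Cauchy--Schwarz and $A^2/B\ge 2A-B$, which is algebraically equivalent to your completed square $\int_0^b|u'-Vu|^2\,dt\ge 0$, and its choice $f(t)=-\big(t+\tfrac{1}{2\sigma}\big)^{-1}$ gives precisely your field $V=\tfrac{f(b)-f}{2}=\tfrac{1}{2(t+a)}-\tfrac{1}{2L}$. Even the final step coincides: the paper's bound $\big(t+\tfrac{1}{2\sigma}\big)^{-1}\big(b+\tfrac{1}{2\sigma}\big)^{-1}\ge\big(b+\tfrac{1}{2\sigma}\big)^{-2}$ is exactly your discarded nonnegative excess $\tfrac{1}{2L}\big(\tfrac{1}{t+a}-\tfrac{1}{L}\big)\ge 0$.
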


\begin{proof}  The inequality is trivial for $\sigma=0$. Hence we may assume that $\sigma>0$. 
Let $f\in C^1[0,b]$. Integration by parts and Cauchy-Schwartz
inequality show that
\begin{align*}
\Bigl( \int_0^b (f(t)-f(b))' \,|u|^2\, dt - & (f(b)-f(0))
|u(0)|^2\Bigr)^2 = \Bigl(\int_0^b (f(b)-f(t))\,(u'\bar u + u\, \bar
u')\, dt\Bigr)^2
\\
& \qquad \qquad \leq
4\Bigl(\int_0^b|u'|^2\,dt\Bigr)\Bigl(\int_0^b(f(b)-f(t))^2\,|u|^2\,dt\Bigr).
\end{align*}
This together with the inequality
$$
\frac{A^2}{B} \ge 2A - B, \qquad B>0,
$$
gives
\begin{equation} \label{inter}
\int_0^b |u'(t)|^2\, dt  +\frac{f(b) - f(0)}{2}\,  |u(0)|^2 \geq
\frac{1}{2}\, \int_0^b f^{'}(t) |u|^2\, dt - \frac 14\, \int_0^b
(f(b)-f(t))^2 |u|^2\, dt.
\end{equation}
By inserting
$$
f(t) = -\Big(t +\frac{1}{2\sigma}\Big)^{-1}
$$
into \eqref{inter} and using the fact that
$$
\Big (t+\frac{1}{2\sigma}\Big)^{-1}\Big
(b+\frac{1}{2\sigma}\Big)^{-1} \geq \Big
(b+\frac{1}{2\sigma}\Big)^{-2}, \qquad 0\leq t \leq b,
$$
we arrive at \eqref{cor2-eq}.
\end{proof}

\begin{remark} \label{added}
Although the right hand side of \eqref{cor2-eq} is optimal in the limit $\sigma\to\infty$, the weight in the first term might be improved for $\sigma$ large enough, depending on $b$, but finite. This follows from the proof of \cite[Lemma A.1]{BM} applied to a test function $v(t)=(t+2/\sigma)^{-1/2} u(t)$ with $u\in C^1[0,b]$. 
\end{remark}

%%%%%%%%%%%%%%%%%%%%%%%%%%%%%%%%%%%%%%%%

\begin{proof}[{\bf Proof of Theorem \ref{main}}] 
We start by proving the statement for $u\in C^1(\overline{\Omega})$. Assume first that $\sigma$ is continuous. Since $\partial\Omega$ is closed in $\R^n$, by Tietze's extension Theorem there exists a continuous function $\Sigma: \, \Omega\to \R$ whose restriction to $\partial\Omega$ coincides with $\sigma$.

Let $Q\subset\Omega$ be an open convex polytop in $\Bbb R^n$ with $N$ sides
$\Gamma_j$, $j=1,\dots N$. Clearly we have $u\in C^1(\bar Q)$. Denote by $n_j$ the unit inner normal vector to $\Gamma_j$.
For each side $\Gamma_j$ we consider the domain $P_j$ attached to $\Gamma_j$ by including all the points from $Q$ for which the distance to the boundary $\partial Q$ is achieved at a point belonging to $\Gamma_j$. More precisely, 
$$
P_j =\big\{x\in Q: \,  \exists \ y\in \Gamma_j:
{\rm dist} (x, \partial Q) = |x-y|\big\}.
$$
Then $Q = \cup_j \overline{P}_j$ and the singular set of $Q$ is given by $S= \cup_{j=1}^N (\partial P_j\setminus \Gamma_j)$. The inradius of $\Omega$ obviously satisfies 
\begin{equation} \label{smallerR}
R_{in} \geq \max_j \, \max_{x\in P_j}\,  {\rm dist}\, (x,\Gamma_j).
\end{equation}
Moreover, for each $y\in\Gamma_j$ there is a unique point $x_y\in S$ and $t_y$ such that
\begin{equation} \label{ty}
x_y =   y+ t_y\, n_j .
\end{equation}
Denote by $p_Q$ the projection on $\partial Q$ defined in the same way as $p$ in \eqref{projection} with $\Omega$ replaced by $Q$. We apply Lemma \ref{lem1} along the normal vector $n_j$ and, taking into account \eqref{smallerR},  we obtain
\begin{multline*}
\int_0^{t_y}  |\partial_t u(y +t\, n_j)|^2 \, dt +  \Sigma_Q(y) \, |u(y)|^2\\
\ge
\frac14\, \int_0^{t_y} \Big(\Big (t+\frac{1}{2\Sigma_Q(y)}\Big)^{-2}
+  \Big (R_{in}+\frac{1}{2\Sigma_Q(y)}\Big)^{-2}\Big)\, |u(y +t\, n_j)|^2\, dt\\
+ \frac 12\,  \Big(R_{in} +\frac{1}{2\Sigma_Q(y) }\Big)^{-1} \,
|u(y)|^2,
\end{multline*}
where $\Sigma_Q$ is the restriction of $\Sigma$ on $\partial Q$.
Next we note that 
$$
t = \delta(y+t\, n_j) \qquad   \forall\, y\in \Gamma_j, \ \ \forall\ t\in (0,t_y). 
$$
Hence, integrating over variables orthogonal to $n_j$ and using the  invariance of the Laplacian with respect to rotations we arrive at 
\begin{align}
& \int_{P_j} |\nabla u(x)|^2 \, dx + \int_{\Gamma_j} \Sigma_Q(y) \, |u(y)|^2\, dy  \geq
\frac14\, \int_{P_j} \Big(\Big (\delta(x)+\frac{1}{2\, \Sigma_Q(p_Q(x))}\Big)^{-2}
\label{eq:polygons} \\ 
& \qquad +  \Big (R_{in}+\frac{1}{2\, \Sigma_Q(p_Q(x))}\Big)^{-2}\Big)\, |u(x)|^2\, dx  
+ \frac 12\,  \int_{\Gamma_j}\Big(R_{in}
+\frac{1}{2\Sigma_Q(y)}\Big)^{-1} \, |u(y)|^2 \, dy,  \nonumber 
\end{align}
where $dy$ denotes the $(n-1)-$dimensional Lebesgue measure on $\Gamma_j$. 
Summation of \eqref{eq:polygons} over $j$ gives us inequality \eqref{hardy-main} for any convex polytop $Q\subset\Omega$ with $\sigma$ replaced by $\Sigma_Q$ and $p$ replaced by $p_Q$.  

Since $\Omega$ is convex, there exists a sequence of $n-$dimensional convex polytops $Q_m\subset\Omega,\, m\in\N$, which approximates $\Omega$. More precisely, for every $\varepsilon$ there exists an $m_\eps$ such that the Hausdorf distance between $\Omega$ and $Q_{m_\eps}$ satisfies $d_H(\Omega, Q_{m_\eps}) < \varepsilon$, see \cite[\S 9]{had}. Since $Q_m$ approximates $\Omega$ also in the surface measure, \cite[\S 14]{had},  from the continuity of $u$ and $\Sigma$ it follows that 
$$
\Sigma_{Q_m}(p_{Q_m}(x))\  \to \ \sigma(p(x)) \qquad \text{a. e.} \quad  x\in\Omega, 
$$
and
$$  
\int_{\partial Q_m}\! \Sigma_{Q_m}(y) \, |u(y)|^2\, d y\ \to \  \int_{\partial\Omega}\! \sigma(y) \, |u(y)|^2\, d\nu(y)
$$
as $m\to\infty$. Hence, by the dominated convergence we can pass to the limit to get 
\begin{align}
& \int_{\Omega} |\nabla u(x)|^2 \, dx + \int_{\partial\Omega} \sigma(y)\, |u(y)|^2\, d\nu(y)  \geq
\frac14\, \int_{\Omega} \Big(\Big (\delta(x)+\frac{1}{2\, \sigma(p(x))} \Big)^{-2}
 \label{eq:u-cont} \\ 
& \qquad +  \Big (R_{in}+\frac{1}{2\, \sigma(p(x))}\Big)^{-2}\Big)\, |u(x)|^2\, dx  
+ \frac 12\,  \int_{\partial\Omega}\Big(R_{in} +\frac{1}{2\sigma(y)}\Big)^{-1} \, |u(y)|^2 \, d\nu(y). \nonumber 
\end{align}
for all $u\in C^1(\overline{\Omega})$ and any $\sigma$ continuous. 

Next we note that if $\sigma\in L^\infty(\partial\Omega)$, then in view of the regularity of $\partial\Omega$ there exists a sequence of continuous functions $\sigma_k$ on $\partial\Omega$ which converges to $\sigma$ in $L^1(\partial\Omega)$ as $k\to\infty$. Then $\sigma_k$ admits a subsequence, which we still denote by $\sigma_k$, such that $\sigma_k \to \sigma$ almost everywhere on $\partial\Omega$. Hence
$\sigma_k(p(x)) \to \sigma(p(x))$  for almost every $x\in\Omega$.
From inequality \eqref{eq:u-cont} it follows that \eqref{hardy-main} holds for  all $\sigma_k$.  
Since $u |_{\partial\Omega} \in L^\infty(\partial\Omega)$, we can pass to the limit as $k\to\infty$ and using  the dominated convergence we obtain inequality \eqref{hardy-main} for any $\sigma\in L^\infty(\partial\Omega)$ and all $u\in C^1(\overline{\Omega})$. 

Finally, if $u\in H^1(\Omega)$, then by density there exists a sequence $u_j \in C^1(\overline{\Omega})$ such that $\|u-u_j\|_{H^1(\Omega)} \to 0$ as $j\to \infty$. By the regularity of $\Omega$ it follows that $H^1(\Omega) \hookrightarrow L^2(\partial\Omega)$,  \cite[Sect.7.5]{Ad}. Hence, after applying inequality \eqref{eq:u-cont} to $u_j$ and letting $j\to \infty$ we conclude that \eqref{hardy-main} holds for all $u\in H^1(\Omega)$.  
\end{proof}

%%%%%%%%%%%%%%%%%%%%%%%%%%%%%%%%%%%%%%%%%%%%%%%%%%%%%
\subsection{Unbounded convex domains} For unbounded domains we need to impose some decay conditions on the test functions. Let $\rho>0 $ and define 
\begin{equation} \label{space-1}
\dot{C}^1(B_\rho) = \big\{ u\in C^1(\R^n)\, : \, u(x)= 0 \quad  \forall\, x \, :\, |x| \geq \rho \big\}, 
\end{equation}
where $B_\rho =  \{ x\in\R^n\, : \, |x| < \rho\}$. We have 

\begin{theorem}
Let $\Omega$ be open and convex and let $0\leq \sigma \in L^\infty(\partial\Omega)$. Let $\rho >0$. Then inequality \eqref{hardy-main} holds for all $u\in \dot{C}^1(B_\rho)$. 
\end{theorem}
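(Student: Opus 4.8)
The plan is to extend Theorem \ref{main} from bounded convex domains to unbounded convex domains, using the compact support of the test functions to reduce to the bounded case. The central observation is that a function $u\in \dot{C}^1(B_\rho)$ vanishes identically outside the ball $B_\rho$, so only the part of $\Omega$ that meets $B_\rho$ contributes to either side of inequality \eqref{hardy-main}. Since $\Omega$ is convex, the intersection $\Omega\cap B_\rho$ is a bounded convex set, and one expects to be able to enlarge it slightly to a bounded convex domain $\widetilde{\Omega}$ satisfying Assumption \ref{ass-basic}, which contains the support of $u$ in its interior and to which Theorem \ref{main} applies directly.

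First I would fix $u\in \dot{C}^1(B_\rho)$ and construct an auxiliary bounded convex domain $\widetilde{\Omega}\subset\Omega$ with the property that $\{x\in\Omega:u(x)\neq 0\}\subset\widetilde{\Omega}$, for instance by intersecting $\Omega$ with a large ball $B_{\rho'}$ for some $\rho'>\rho$. The convexity of $\Omega$ guarantees that $\widetilde{\Omega}=\Omega\cap B_{\rho'}$ is again convex and bounded, and it satisfies the strong local Lipschitz condition since both $\Omega$ and the ball do. Next I would apply Theorem \ref{main} on $\widetilde{\Omega}$ to the restriction $u|_{\widetilde{\Omega}}$, together with the restricted boundary coefficient $\sigma$ on $\partial\widetilde{\Omega}\cap\partial\Omega$, extended in some nonnegative way onto the spherical portion $\partial\widetilde{\Omega}\cap\partial B_{\rho'}$. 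Because $u$ vanishes near the spherical cap $\partial B_{\rho'}\cap\Omega$, the boundary integral over that part of $\partial\widetilde{\Omega}$ contributes nothing, and likewise all the bulk integrals over $\widetilde{\Omega}\setminus B_\rho$ vanish; what survives on both sides of the inequality is precisely the integrals over $\Omega\cap B_\rho$, which coincide with the corresponding integrals over $\Omega$ for this compactly supported $u$.

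The main obstacle I anticipate is the mismatch between the distance function $\delta$ and the inradius $R_{in}$ computed for $\widetilde{\Omega}$ versus those for $\Omega$. For a point $x$ in the support of $u$, the distance $\operatorname{dist}(x,\partial\widetilde{\Omega})$ could in principle be smaller than $\delta(x)=\operatorname{dist}(x,\partial\Omega)$ if the nearest boundary point lies on the artificial spherical cap rather than on $\partial\Omega$; however, by choosing $\rho'$ large enough that the cap stays far from the support of $u$, one ensures that for every $x$ in the support the nearest point of $\partial\widetilde{\Omega}$ lies on $\partial\Omega$, so that $\delta$ and the projection $p$ agree for $\widetilde{\Omega}$ and $\Omega$ on the relevant set. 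The inradius satisfies $R_{in}(\widetilde{\Omega})\leq R_{in}(\Omega)$, and since the weights $(\,\cdot\,+\tfrac{1}{2\sigma})^{-2}$ are \emph{decreasing} in $R_{in}$, passing from $R_{in}(\widetilde{\Omega})$ to the possibly larger $R_{in}(\Omega)$ only weakens the right-hand side, so the inequality with $R_{in}=R_{in}(\Omega)$ follows a fortiori.

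Having established \eqref{hardy-main} for all $u\in\dot{C}^1(B_\rho)$ in this way, the proof is complete; no density argument is needed here since the statement is formulated directly for the test-function class $\dot{C}^1(B_\rho)$. The one point requiring a little care is verifying that the extension of $\sigma$ onto the spherical cap can be chosen in $L^\infty$ and nonnegative, which is immediate (take $\sigma\equiv 0$ there, or any nonnegative bounded value), and that this extension plays no role because $u$ vanishes on a neighbourhood of the cap.
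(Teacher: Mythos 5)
Your proposal is correct and is essentially the paper's own argument: both reduce to the bounded case by intersecting $\Omega$ with a ball, apply Theorem \ref{main} to the resulting bounded convex domain, and exploit the compact support of $u$ together with the monotonicity of the Hardy weights in $R_{in}$. The only difference is the bookkeeping on the artificial spherical boundary: the paper intersects with $B_\rho$ itself and sets the boundary coefficient equal to $+\infty$ on the cap, using that the weights only increase when $\delta$ decreases and $\sigma$ increases, whereas you take a larger ball $B_{\rho'}$ so that $\delta$ and $p$ are literally unchanged on the support of $u$ and extend $\sigma$ by zero.
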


\begin{proof} 
Let $u\in \dot{C}^1(B_\rho)$.  Define $\Omega_\rho = \Omega \cap B_\rho$. Then $\Omega_\rho$ is convex and bounded. Now define $\hat\sigma:\partial\Omega_\rho \to \R_+$ by 
$$
\hat \sigma(y) = \left\{
\begin{array}{l@{\qquad}l}
 \sigma(y)    &        {\rm if \ } \  y\in\  \partial\Omega\setminus\partial B_\rho ,  \\
%& \\
 +\infty     & \text{elsewhere}.
\end{array}
\right. 
$$
Let $p_\rho$ be the projection on $\partial\Omega_\rho$ defined in the same way as $p$ in \eqref{projection} with $\Omega$ replaced by $\Omega_\rho$. Accordingly, let $\delta_\rho(x) = $dist$(x,\partial\Omega_\rho)$. Then
$$
\hat\sigma(p_\rho(x)) \geq \sigma(p(x)), \quad \delta_\rho(x) \leq \delta(x) \qquad \text{for\, a. e.}\quad  x\in \Omega_\rho.
$$
Moreover, the inradius of $\Omega_\rho$ is less or equal to the inradius $R_{in}$ of $\Omega$.  Since $u |_{\Omega_\rho} \in H^1(\Omega_\rho)$ we can apply Theorem \ref{main} to $\Omega_\rho$ with $\hat\sigma$ defined as above. This gives the result. 
\end{proof}

%\newpage
%%%%%%%%%%%%%%%%%%%%%%%%%%%%%%%%%%%%%%%%%%%%%%%%%%%%%%%%%%%%%%%%%%%%%%%%%%%%%%%%%

\section{General bounded domains} \label{sect-general}

In this section we will establish a version of the Hardy inequality on general open domains $\Omega\subset\R^n$ satisfying Assumption \ref{ass-basic}. We will follow the approach of \cite[Sec.1.5]{D1}, see also \cite{D2}. As in the case of convex domains we start with an auxiliary one-dimensional result.  

\begin{lemma}\label{lem2}
Let $u\in AC[0,b]$. Then for any $\sigma_1\geq 0$ and $\sigma_2 \geq 0$ we have
\begin{align} \label{lem2-eq}
\int_0^b |u'(t)|^2 \, dt +  \sigma_1\, |u(0)|^2 + \sigma_2\, |u(b)|^2 & \, \geq\,  \frac14\,
\int_0^{b/2} \Big (t+\frac{1}{2\sigma_1}\Big)^{-2}\, |u(t)|^2\, dt  \\
& \quad + \frac 14\, 
\int_{b/2}^b \Big (b-t+\frac{1}{2\sigma_2}\Big)^{-2}\, |u(t)|^2\, dt. \nonumber
\end{align}
\end{lemma}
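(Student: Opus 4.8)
The plan is to mimic the structure of the proof of Lemma~\ref{lem1}, but now accommodating \emph{two} boundary terms, one at each endpoint of $[0,b]$. The key observation is that the interval splits naturally at its midpoint: the portion $[0,b/2]$ is ``closest'' to the endpoint $0$ (where the weight $\sigma_1$ lives) and the portion $[b/2,b]$ is closest to $b$ (where $\sigma_2$ lives). So I would first reduce to the case $\sigma_1,\sigma_2>0$, since for $\sigma_1=0$ or $\sigma_2=0$ the corresponding term on the right-hand side vanishes (the weight $(t+\tfrac{1}{2\sigma})^{-2}\to 0$ as $\sigma\to 0$), making that half of the inequality trivial.

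Next I would establish the integration-by-parts identity on each half separately. On $[0,b/2]$ I would reuse the mechanism of \eqref{inter}: for a $C^1$ function $f_1$ on $[0,b/2]$, one has
\begin{equation*}
\int_0^{b/2} |u'|^2\, dt + \frac{f_1(b/2)-f_1(0)}{2}\,|u(0)|^2 \;\geq\; \frac12\int_0^{b/2} f_1'(t)\,|u|^2\, dt - \frac14\int_0^{b/2}(f_1(b/2)-f_1(t))^2\,|u|^2\, dt,
\end{equation*}
and I would make the choice $f_1(t)=-(t+\tfrac{1}{2\sigma_1})^{-1}$, so that $f_1'(t)=(t+\tfrac{1}{2\sigma_1})^{-2}$ and $(f_1(b/2)-f_1(t))^2 \le f_1'(t)$, exactly as in Lemma~\ref{lem1}. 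The subtlety is the boundary coefficient: here $\tfrac12(f_1(b/2)-f_1(0)) = \tfrac12\big(\sigma_1 - (b/2+\tfrac{1}{2\sigma_1})^{-1}\big) \le \sigma_1$, so the term $\sigma_1\,|u(0)|^2$ supplied on the left dominates what the identity requires, and the half-inequality $\int_0^{b/2}|u'|^2 + \sigma_1|u(0)|^2 \ge \tfrac14\int_0^{b/2}(t+\tfrac{1}{2\sigma_1})^{-2}|u|^2$ follows. On $[b/2,b]$ I would run the mirror-image argument with $f_2(t)=(b-t+\tfrac{1}{2\sigma_2})^{-1}$, integrating the boundary contribution to the endpoint $b$; after the analogous reflection $s=b-t$ this is literally the same estimate. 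Adding the two halves gives \eqref{lem2-eq}.

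I expect the main obstacle to be bookkeeping the signs and the boundary endpoint values correctly in the reflected half-interval, since the roles of $0$ and $b$ are interchanged and the ``decreasing'' choice of $f_2$ must be handled so that the boundary term lands on $|u(b)|^2$ with the right sign and a coefficient bounded by $\sigma_2$. The only genuine inequality used (beyond Cauchy--Schwarz and $A^2/B \ge 2A-B$) is $(f_i(\text{endpoint})-f_i(t))^2 \le f_i'(t)$ on each half, which holds precisely because the midpoint $b/2$ plays the role that $b$ (respectively $R_{in}$) played in Lemma~\ref{lem1}; unlike Theorem~\ref{main}, no $L^2$ remainder term is produced here, which is consistent with the statement having only the two weighted integrals on the right. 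A density argument extending from $C^1[0,b]$ to $AC[0,b]$ is routine and parallels the treatment in Lemma~\ref{lem1}.
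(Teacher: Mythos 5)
Your proof is correct and follows essentially the same route as the paper: the paper's one-line proof simply applies Lemma \ref{lem1} on $[0,b/2]$ with $\sigma_1$ and, after the reflection $t\mapsto b-t$, on $[b/2,b]$ with $\sigma_2$, discarding the extra nonnegative terms. Your only deviation is re-deriving the half-interval estimate from \eqref{inter} with the cruder bound $(f_1(b/2)-f_1(t))^2\le f_1'(t)$ rather than citing \eqref{cor2-eq} directly, which is a cosmetic difference.
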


\begin{proof}
This follows  immediately from inequality \eqref{cor2-eq}.  
\end{proof}

\noindent Given  $x\in\Omega$ and some $e\in\R^n$ with $\|e\|_n=1$, we introduce
\begin{equation} \label{min2}
d_e(x)= \min \big\{\, |s|\, : \,  s\in\R\, :\, x+s\, e
\notin\Omega\big\}
\end{equation}
Hence, $d_e(x)$ is the distance from $x$ to the boundary of $\Omega$ in the direction of the vector $e$.  Denote by $m(e,x)$ the set on which the minimum in \eqref{min2} is achieved. Clearly, $m(e,x)$ contains either one or two elements. Let us define 
\begin{equation}  
\sigma_e(x) := \max_{s\in m(e,x)} \sigma(x+s e),  \qquad x \in\Omega.
\end{equation}
Finally, let d$\sph$ be the normalized surface measure on the unit sphere in $\R^n$ and define
\begin{equation} \label{mu}
\mu_\sigma(x) := \int_{e:\, \|e\|=1}
\Big(d_e(x)+\frac{1}{2\, \sigma_e(x)}\Big)^{-2}\, {\rm d}\sph(e).
\end{equation}
We have 

\begin{theorem} \label{pseudo-ndim}
Let $\Omega \subset \R^n$ be a bounded domain satisfying Assumption \ref{ass-basic}.  
Then for all $u\in H^1(\Omega)$ and any $\sigma\in L^\infty(\partial\Omega)$ it holds
\begin{equation} \label{hardy:pseudo:2dim}
\int_{\Omega} |\nabla u(x)|^2\, dx + \int_{\partial\Omega}
\sigma(y)\, |u(y)|^2\, d\nu(y) \, \geq \, \frac 14 \int_\Omega\,
|u(x)|^2\, \mu_\sigma(x)\, \, dx.
\end{equation}
\end{theorem}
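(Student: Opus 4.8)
The plan is to adapt Davies' averaging-over-directions method, reducing the $n$-dimensional problem to the one-dimensional inequality of Lemma \ref{lem2} applied along every line in a fixed direction $e$, and then to integrate over all $e$ on the unit sphere. As usual I would first establish \eqref{hardy:pseudo:2dim} for $u\in C^1(\overline{\Omega})$ with $\sigma$ continuous, and recover the general case by exactly the density and approximation arguments used in the proof of Theorem \ref{main} (continuous $\sigma_k\to\sigma$ a.e. together with dominated/Fatou convergence, and $C^1(\overline{\Omega})\ni u_j\to u$ in $H^1(\Omega)$, the boundary term converging via the trace embedding). So the real content is the smooth case.

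Fix a unit vector $e$ and split $\R^n=\R e\oplus e^\perp$, writing $x=y+te$ with $y\in e^\perp$. For a.e.\ $y$ the slice $\{t:\,y+te\in\Omega\}$ is a countable disjoint union of open intervals $(a_k,b_k)$ whose endpoints lie on $\partial\Omega$, and on each of them $t\mapsto u(y+te)$ is absolutely continuous. I would apply Lemma \ref{lem2} on each interval with $\sigma_1=\sigma(y+a_ke)$, $\sigma_2=\sigma(y+b_ke)$ and $b=b_k-a_k$. The key observation is that for a point at parameter $t$ one has $d_e(x)=\min(t-a_k,\,b_k-t)$, with $\sigma_e(x)$ equal to the value of $\sigma$ at the nearer endpoint; hence the two half-interval weights in \eqref{lem2-eq} assemble precisely into $\tfrac14\big(d_e(x)+\tfrac{1}{2\sigma_e(x)}\big)^{-2}$. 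Summing over $k$ and integrating over $y\in e^\perp$, Fubini turns the Dirichlet term into $\int_\Omega|e\cdot\nabla u|^2\,dx$.

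Next I handle the boundary contribution. Integrating the endpoint terms $\sigma_i|u|^2$ over $y\in e^\perp$ amounts to projecting $\partial\Omega$ onto $e^\perp$; under Assumption \ref{ass-basic} the outward normal $\eta(y_b)$ exists $\nu$-a.e.\ and the projection (area) formula gives $\int_{\partial\Omega}\sigma(y_b)\,|u(y_b)|^2\,|e\cdot\eta(y_b)|\,d\nu(y_b)$. Thus for each fixed $e$,
\[
\int_\Omega|e\cdot\nabla u|^2\,dx+\int_{\partial\Omega}\sigma\,|u|^2\,|e\cdot\eta|\,d\nu\ \ge\ \frac14\int_\Omega\Big(d_e(x)+\tfrac{1}{2\sigma_e(x)}\Big)^{-2}|u(x)|^2\,dx.
\]
The crucial move is to bound $|e\cdot\nabla u|^2\le|\nabla u|^2$ pointwise \emph{before} averaging: integrating the inequality over $e$ against the normalized measure $\mathrm{d}\sph$ then leaves $\int_\Omega|\nabla u|^2\,dx$ unchanged (the integrand no longer depends on $e$), produces $\tfrac14\int_\Omega\mu_\sigma|u|^2$ on the right by the very definition \eqref{mu}, and turns the boundary term into $c_n\int_{\partial\Omega}\sigma|u|^2\,d\nu$ with $c_n=\int_{\|e\|=1}|e\cdot\eta|\,\mathrm{d}\sph(e)$. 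Since $c_n\le\big(\int_{\|e\|=1}|e\cdot\eta|^2\,\mathrm{d}\sph\big)^{1/2}=n^{-1/2}\le1$ and $\sigma\ge0$, this boundary term is dominated by $\int_{\partial\Omega}\sigma|u|^2\,d\nu$, which yields \eqref{hardy:pseudo:2dim}.

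I expect the main obstacle to be the boundary projection step: making rigorous, under only the strong local Lipschitz condition, that summing the endpoint Robin terms over slices equals $\int_{\partial\Omega}\sigma|u|^2|e\cdot\eta|\,d\nu$, and controlling the various measure-zero exceptional sets (the singular set of $\Omega$, directions $e$ tangent to $\partial\Omega$ on a positive-measure piece, and slices meeting $\partial\Omega$ in infinitely many components). Everything else — the matching of the one-dimensional weight with $d_e$ and $\sigma_e$, the Fubini and averaging identities, and the elementary bound $c_n\le1$ — is routine once this coarea/projection identity is in hand.
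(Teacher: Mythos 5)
Your proposal is correct and takes essentially the same route as the paper: its proof likewise applies Lemma \ref{lem2} along every line in a fixed direction $e$ (absorbing the endpoint Robin terms into the full boundary integral, using $\sigma\ge 0$, and bounding $|\partial_e u|\le |\nabla u|$), and then integrates over the unit sphere so that the weight $\mu_\sigma$ appears by the definition \eqref{mu}. Your version is simply more detailed — in particular, keeping the Jacobian factor $|e\cdot\eta|$ and the constant $c_n\le n^{-1/2}$ even slightly sharpens the boundary term before it is discarded — but the underlying argument is the same.
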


\begin{proof}
By Assumption \ref{ass-basic} and \cite[Thm.3.22]{Ad} 
it suffices to prove inequality \eqref{hardy:pseudo:2dim}  for $u\in C^1(\overline{\Omega})$. We denote by $\partial_e$ the partial differentiation in direction $e$. From Lemma \ref{lem2} we find out that 
\begin{align} \label{check}
\frac 14 
\int_\Omega\,  \Big(d_e(x)+\frac{1}{2\, \sigma_e(x)}\Big)^{-2}\, |u(x)|^2\, dx 
&
\leq \int_{\Omega} |\partial_e u(x)|^2\, dx + \int_{\partial\Omega} \sigma(y)\, |u(y)|^2\, d\nu(y)
\end{align}
holds for all $e$ with $\|e\|=1$. The result then follows by integrating the last inequality with respect to $e$ over the unit sphere.
\end{proof}

\vspace{0.2cm}

\begin{corollary} \label{cor-general}
Let $\Omega$ satisfy conditions of Theorem \ref{pseudo-ndim}. Assume that $\sigma(y)=\sigma>0$ is constant. Then there exists a constant $K=K(\Omega,n)$, independent of $\sigma$, such that for all $u\in H^1(\Omega)$ it holds
\begin{equation} \label{eq:hardy:2dim}
\int_{\Omega} |\nabla u(x)|^2\, dx + \sigma\! \int_{\partial\Omega}
|u(y)|^2\, d\nu(y) \, \geq \,   K \!
\int_\Omega\, \frac{|u(x)|^2}{(\delta(x)+\frac{1}{4\sigma})^2}\, \, dx.
\end{equation}
\end{corollary}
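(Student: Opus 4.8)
The plan is to read off \eqref{eq:hardy:2dim} from Theorem \ref{pseudo-ndim} by comparing the two integral weights pointwise. For constant $\sigma$ one has $\sigma_e(x)=\sigma$ in every direction, so $\mu_\sigma(x)=\int_{\|e\|=1}(d_e(x)+\frac{1}{2\sigma})^{-2}\,{\rm d}\sph(e)$, and in view of \eqref{hardy:pseudo:2dim} it suffices to find $K=K(\Omega,n)$, independent of $\sigma$, with
\begin{equation*}
\mu_\sigma(x)\ \ge\ 4K\,\Big(\delta(x)+\tfrac{1}{4\sigma}\Big)^{-2}\qquad\text{for a.e. } x\in\Omega .
\end{equation*}
I would base the comparison on two geometric facts. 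The first is the trivial global bound $d_e(x)\le D:=\mathrm{diam}\,\Omega$, valid for every $e$ and every $x$; since $\sph$ is normalized this already gives $\mu_\sigma(x)\ge(D+\frac{1}{2\sigma})^{-2}$. The second is a local exit-cone estimate: there exist $\omega_n>0$, $C_0\ge 1$ and a threshold $\delta_0>0$, all depending only on $n$ and on the Lipschitz character of $\Omega$ from Assumption \ref{ass-basic}, such that whenever $\delta(x)\le\delta_0$ the set $A(x)=\{e:\ d_e(x)\le C_0\,\delta(x)\}$ has normalized measure $\sph(A(x))\ge\omega_n$.

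Granting both facts I would argue by splitting on the size of $\delta(x)$. If $\delta(x)>\delta_0$, then combining the global bound with $\delta_0\le\delta(x)\le R_{in}\le D$ and writing $u=1/\sigma$, the quotient $\frac{\delta(x)+\frac{1}{4\sigma}}{D+\frac{1}{2\sigma}}$ dominates the M\"obius function $u\mapsto\frac{\delta_0+u/4}{D+u/2}$, whose values for $u\ge 0$ lie between $\delta_0/D$ and $\tfrac12$; it is therefore bounded below by $\beta:=\min(\delta_0/D,\tfrac12)>0$ uniformly in $\sigma$, which yields $\mu_\sigma(x)\ge\beta^2(\delta(x)+\frac{1}{4\sigma})^{-2}$. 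If instead $\delta(x)\le\delta_0$, I would restrict the integral for $\mu_\sigma$ to $A(x)$ and bound the integrand below by its value at $d_e(x)=C_0\delta(x)$, obtaining $\mu_\sigma(x)\ge\omega_n(C_0\delta(x)+\frac{1}{2\sigma})^{-2}$; since $\frac{1}{2\sigma}=2\cdot\frac{1}{4\sigma}$ and $C_0\ge1$ one has $C_0\delta(x)+\frac{1}{2\sigma}\le(C_0+2)(\delta(x)+\frac{1}{4\sigma})$, and hence $\mu_\sigma(x)\ge\omega_n(C_0+2)^{-2}(\delta(x)+\frac{1}{4\sigma})^{-2}$. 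Choosing $K=\tfrac14\min\{\beta^2,\ \omega_n(C_0+2)^{-2}\}$ then satisfies the displayed inequality in both regimes, and, decisively, this $K$ depends only on $\Omega$ and $n$.

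The exit-cone estimate is the only substantial ingredient, and I expect it to be the main obstacle. To prove it I would fix $x$ with $\delta(x)\le\delta_0$, set $y_0=p(x)$ and pass to the graph coordinates guaranteed by Assumption \ref{ass-basic}: on a ball $B(y_0,r_0)$ of uniform radius $r_0$ the boundary $\partial\Omega$ is the graph of an $L$-Lipschitz function, with $L$ independent of $y_0$, and $\Omega$ lies locally on one side of it. Taking $\delta_0$ small relative to $r_0$ confines the rays under consideration to this chart. The direction $e_0=(y_0-x)/\delta(x)$ points outward, and I would show that every unit vector $e$ making a sufficiently small angle with $e_0$ forces the ray $x+s\,e$ to meet the graph at some $s\le C_0\delta(x)$; the admissible aperture, and with it $\omega_n$ and $C_0$, can be extracted from $L$ and $n$ by an elementary comparison of the ray with the cone lying below the graph. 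The delicate points are that a Lipschitz domain admits neither a uniform interior ball nor a pointwise normal, so one must work with the graph inequality directly and track how both the aperture and $C_0$ deteriorate as $L$ increases, all while keeping the constants free of $\sigma$, which enters only through the additive term $\frac{1}{2\sigma}$ already absorbed in the comparison above.
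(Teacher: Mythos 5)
Your reduction of \eqref{eq:hardy:2dim} to the pointwise bound $\mu_\sigma(x)\ge 4K\big(\delta(x)+\frac{1}{4\sigma}\big)^{-2}$, the two-regime splitting in $\delta(x)$, and the arithmetic absorbing $\frac{1}{2\sigma}$ into $\frac{1}{4\sigma}$ are all correct, and the exit-cone estimate you isolate is indeed the heart of the matter; as a \emph{statement} it is true. The gap is in how you propose to prove it: it is false that every unit vector $e$ in a sufficiently small cone around $e_0=(p(x)-x)/\delta(x)$ satisfies $d_e(x)\le C_0\,\delta(x)$. Concretely, let $\Omega\subset\R^2$ be a bounded Lipschitz domain that near $y_0=0$ coincides with $\{(z_1,z_2):\,z_2<L|z_1|\}$ with $L>1$ (a re-entrant corner, perfectly admissible under Assumption \ref{ass-basic}), and take $x=\delta\,(L,-1)/\sqrt{1+L^2}$. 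Then $p(x)=0$, $\delta(x)=\delta$ and $e_0=(-L,1)/\sqrt{1+L^2}$; for $s>\delta$ the ray $x+s\,e_0=(s-\delta)(-L,1)/\sqrt{1+L^2}$ satisfies $z_2<L|z_1|$ precisely because $L>1$, i.e.\ it re-enters $\Omega$ immediately after touching the vertex. Any rotation of $e_0$ by an arbitrarily small angle toward the interior side then yields a ray that misses the vertex altogether and stays inside $\Omega$ for a distance of order one (its first exit occurs only at the far part of the boundary), so $d_e(x)\not\le C_0\delta$ for those directions once $\delta$ is small. Hence $A(x)$ never contains a full spherical cap centred at $e_0$, no matter how small its aperture: the cap must be aimed elsewhere, and shrinking the aperture (your proposed remedy as $L$ grows) cannot repair this.

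The fix is to aim the cap at the exterior cone provided by the Lipschitz chart rather than at $e_0$: if near $y_0$ the boundary is the graph of an $L$-Lipschitz function and $\Omega$ lies below it, the open cone $K^+=\{z:\,z_n-y_{0,n}>L|z'-y_0'|\}$ is locally exterior to $\Omega$, and the ball of radius $\delta(x)/\sqrt{1+L^2}$ inscribed in $K^+$ at height $\delta(x)$ above $y_0$ lies within distance $2\delta(x)$ of $x$; every direction from $x$ subtending that ball exits $\Omega$ within $3\delta(x)$, and the aperture of this set of directions depends only on $L$ and $n$. With this replacement your argument closes. It is worth comparing with the paper's own proof, which sidesteps graph coordinates entirely: it considers $\Lambda_x=\{e:\,\exists\,s>0,\ x+s\,e\notin\Omega,\ |x+s\,e-p(x)|<\delta(x)\}$, notes that $d_e(x)\le 2\delta(x)$ for $e\in\Lambda_x$ by the triangle inequality, and lower-bounds $\sph(\Lambda_x)$ by a volume comparison: the exterior portion of the ball $B(p(x),\delta(x))$ has volume at least $\alpha\,\delta(x)^n$ by Assumption \ref{ass-basic}, while it is contained in the cone over $\Lambda_x$ of radius $2\delta(x)$, whose volume is at most $c_n\,\sph(\Lambda_x)\,\delta(x)^n$; this gives $\sph(\Lambda_x)\ge\alpha/c_n$ for every $x\in\Omega$ at once, so no threshold $\delta_0$, no case distinction, and none of your diameter/M\"obius-function steps are needed.
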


\begin{proof} Denote by $\omega_n$ the surface area of the unit ball in $\R^n$.
Let $x\in\Omega$ and let $a\in\partial\Omega$ be such that
$r:=\delta(x) =|x-a|$. Define
$$
\Lambda_x = \big\{\, e\in\R^n\, :\, \|e\|=1\, \wedge \, \exists\,  s>0\, \,
\text{such\, that}\, \, x+ s\, e \notin \Omega \, \,
\text{and}\, \, |x+s\, e-a| <r \big\}.
$$
By the triangle inequality we have $s < 2r$. It follows that
the region $\{y\notin\Omega\, :\, |y-a| <r\}$ is contained in the
$n-$dimensional cone of radius $2r$ with the opening angle
$\sph(\Lambda_x)\, \omega_n$ centered in $x$. Hence
\begin{equation} \label{cone}
c_n\, \sph(\Lambda_x)\, r^n \geq \text{Vol}\big (\{y\notin\Omega\, :\, |y-a|
<r\}\big) ,
\end{equation}
for some constant $c_n$ which depends only on $n$. 
On the other hand, from Assumption \ref{ass-basic} it follows that there exits $\alpha>0$ such that 
\begin{equation*}
\text{Vol}\big (\{y\notin\Omega\, :\, |y-a| <r\}\big) \geq  \alpha\, r^n \qquad \forall\, a\in\partial\Omega, \, \, \forall\, r>0.
\end{equation*}
This together with \eqref{cone} implies that $\sph(\Lambda_x) \geq c_n^{-1}\, \alpha$ for every $x\in\Omega$.
We thus get
\begin{align*}
\mu_\sigma(x) & = \int_{e:\, \|e\|=1}
\Big(d_e(x)+\frac{1}{2 \sigma}\Big)^{-2}\, {\rm d}\sph(e) \geq 
 \int_{\Lambda_x}
\Big(d_e(x)+\frac{1}{2 \sigma}\Big)^{-2}\, {\rm d}\sph(e) \\
& \geq  \Big(2\, \delta(x)+\frac{1}{2 \sigma}\Big)^{-2}\,  \sph(\Lambda_x) \geq
\Big(2\, \delta(x)+\frac{1}{2 \sigma}\Big)^{-2}\,   \frac{\alpha}{c_n},
\end{align*}
since for every $e\in\Lambda_x$ we have $d_e(x) \leq s
\leq 2\, \delta(x)$. Inequality \eqref{eq:hardy:2dim} now follows
from Theorem \ref{pseudo-ndim}.
\end{proof}

%%%%%%%%%%%%%%%%%%%%%%%%%%%%%%%%%%%%%%%%%%%%%%%%%%%%%%%%%
\subsection{The case of sign changing $\sigma$} Note that the assumption $\sigma\geq 0$ is crucial for the results given in Theorems \ref{main} and \ref{pseudo-ndim}. When $\sigma$ is negative on some part of $\partial\Omega$, then a simple test function argument shows that the resulting Robin-Laplacian may have a negative eigenvalue even if 
$$
\int_{\partial\Omega}\! \sigma\, d\nu >0,
$$  
provided $\sigma$ is chosen in a suitable way. 
This tells us that if $\sigma$ changes sign, then no Hardy inequality with positive integral weight can hold unless some further restrictions are made. 

In order to give an example of a Hardy inequality with a sign changing weight, we consider a class of domains characterized as follows. Suppose that $f :\R^{n-1} \to \R$ is continuous and that there exists an open set $A\subset \R^{n-1}$ such that $f >0$ in $A$ and $f= 0$ on $\partial A$. We then define
\begin{equation}
\Omega = \{\, x:=(x',t) \in \R^n\, :\, x' \in A\, , \, 0 < t < f(x') \}.
\end{equation}
Let $\sigma\in L^\infty(\partial\Omega)$ be such that $\sigma = 0$ on $\partial\Omega \setminus A$ and denote by $A_{\pm} \subset A$ the sets on which $\sigma$ is positive respectively negative. Let $\chi_{A_\pm}$ be the related characteristic functions. We have 

\begin{proposition} \label{sigma-neg-1}
Let $\Omega$ and $\sigma$ be given as above. Then for any $u\in H^1(\Omega)$ it holds 
\begin{align}
 \int_{\Omega} |\nabla u(x)|^2 \, dx +\! \int_{\partial\Omega}\! \sigma(y) \, |u(y)|^2\, d\nu(y) & \geq 
 \int_{\Omega} \rho(x)\, |u(x)|^2\, dx  \, + \label{eq:sigma-neg} \\
& \quad + \frac 12 \int_{A_+} \Big(f(x') +\frac{1}{2\sigma(x')}\Big)^{-1}\, |u(x',0)|^2\, dx',  \nonumber
\end{align}
where 
$$
\rho(x) =\rho(x',t) = \frac 12 \big(f(x') +\frac{1}{2\sigma(x')}\big)^{-2} \chi_{A_+} (x') + \mu(x')\, \chi_{A_-}(x'),
$$ 
and $-\mu(x')$ is the first positive solution to the implicit equation  
\begin{equation} \label{mu-ev}
-\sigma(x') = \sqrt{- \mu(x')} \, \, \tanh \big( f(x') \sqrt{- \mu(x')} \big), \qquad   x' \in A_-. 
\end{equation}
\end{proposition}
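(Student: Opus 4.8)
The plan is to exploit the columnar structure of $\Omega$ by slicing along the vertical direction $e=(0,\dots,0,1)$, reducing the whole statement to one-dimensional inequalities on the fibres $\{(x',t):0<t<f(x')\}$, $x'\in A$. First I would discard the transversal part of the gradient via $|\nabla u|^2\ge|\partial_t u|^2$, and observe that since $\sigma$ vanishes on $\partial\Omega\setminus A$ and the bottom face $A\times\{0\}$ is flat (so that $d\nu=dx'$ there), the left-hand side of \eqref{eq:sigma-neg} is bounded below by
\[
\int_A\Big(\int_0^{f(x')}|\partial_t u(x',t)|^2\,dt+\sigma(x')\,|u(x',0)|^2\Big)\,dx'.
\]
It then suffices to bound the inner bracket from below, pointwise in $x'$, by the corresponding fibre contribution of the right-hand side, and to integrate over $A=A_+\cup A_-\cup\{\sigma=0\}$, the last set contributing nothing since $\rho$ vanishes there while the bracket is nonnegative.

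On $A_+$ the argument is immediate from Lemma \ref{lem1}: applying it with $b=f(x')$ and the positive constant $\sigma(x')$, and then using the elementary bound $(t+\frac1{2\sigma})^{-2}\ge(f+\frac1{2\sigma})^{-2}$ for $0\le t\le f$ inside the first integral, collapses the two-term weight of \eqref{cor2-eq} into $\tfrac12(f(x')+\frac1{2\sigma(x')})^{-2}$, which is exactly $\rho$ on $A_+$, while the boundary term of \eqref{cor2-eq} reproduces the surface integral over $A_+$ in \eqref{eq:sigma-neg}.

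The substantive step is the analysis on $A_-$, where $\sigma(x')<0$ and a genuinely negative weight is needed. Here, for fixed $x'$, I would identify the sharp constant in
\[
\int_0^{b}|v'(t)|^2\,dt+\sigma\,|v(0)|^2\ \ge\ \mu\int_0^{b}|v(t)|^2\,dt,\qquad b=f(x'),\ \sigma=\sigma(x'),
\]
as the bottom of the spectrum of $-d^2/dt^2$ on $[0,b]$ with the Robin condition $v'(0)=\sigma\,v(0)$ at $t=0$ and the natural Neumann condition $v'(b)=0$ at the top (the latter arising because $\sigma=0$ on the graph of $f$). Since $\sigma<0$ this ground-state energy is negative; writing $\mu=-\lambda$ with $\lambda>0$, the nodeless ground state must be $v(t)=\cosh(\sqrt{\lambda}\,(b-t))$, which automatically meets the Neumann condition at $b$, and imposing the Robin condition at $0$ yields precisely $-\sigma=\sqrt{\lambda}\,\tanh(b\sqrt{\lambda})$, i.e. equation \eqref{mu-ev} with $\lambda=-\mu$. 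Because $\lambda\mapsto\sqrt{\lambda}\,\tanh(b\sqrt{\lambda})$ increases monotonically from $0$ to $\infty$, this selects the unique (hence ``first positive'') solution, and positivity of the eigenfunction confirms it is the ground state, so the displayed inequality holds with this $\mu=\mu(x')$ and no boundary remainder.

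Integrating the $A_+$ and $A_-$ fibre inequalities over their respective sets and adding then yields \eqref{eq:sigma-neg}. The only genuine obstacle is the $A_-$ computation: one must justify that the sharp lower bound equals the ground-state eigenvalue (discreteness and semiboundedness of the fibre form follow from the trace inequality on the bounded interval) and that the nodeless trial solution realises the infimum. The remaining points---measurability of $x'\mapsto\mu(x')$ and the Tonelli/dominated-convergence steps needed to integrate the pointwise bounds, noting that the blow-up of $-\mu$ as $f(x')\to0$ is compensated by the shrinking fibre length---are routine.
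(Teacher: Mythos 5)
Your proposal is correct and takes essentially the same approach as the paper: slicing into vertical fibres, applying Lemma \ref{lem1} on $A_+$ and collapsing the two-term weight into $\tfrac12\big(f(x')+\tfrac{1}{2\sigma(x')}\big)^{-2}$, and identifying $\mu(x')$ on $A_-$ as the lowest eigenvalue of the one-dimensional Laplacian with Robin condition at $0$ and Neumann condition at $f(x')$, whose characterization is exactly equation \eqref{mu-ev}. The only difference is one of detail: you make explicit the $\cosh$ ground state, the monotonicity of $\lambda\mapsto\sqrt{\lambda}\,\tanh(b\sqrt{\lambda})$, and the fibrewise integration, all of which the paper compresses into ``a straightforward calculation.''
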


\begin{proof}
Let $x'\in A_-$. A straightforward calculation shows that $\mu(x')$ defined by \eqref{mu-ev} is the lowest eigenvalues of the Laplace operator on interval $(0, f(x'))$ with Neumann boundary condition at $f(x')$ and Robin boundary condition with coefficient $\sigma(x')$ at $0$. Therefore we have 
$$
\int_0^{f(x')} |\partial_t u(x',t)|^2\, dt+\sigma(x')\, |u(x',0)|^2 \geq \mu(x')  \int_0^{f(x')}  |u(x',t)|^2\, dt \qquad \forall\ x'\in A_-.
$$
On the other hand, from Lemma \ref{lem1} we easily deduce that 
\begin{align}
\int_0^{f(x')} |\partial_t u(x',t)|^2\, dt +\sigma(x')\, |u(x',0)|^2 & \geq  \frac 12 \big(f(x') +\frac{1}{2\sigma(x')}\big)^{-2} \int_0^{f(x')}  |u(x',t)|^2\, dt \nonumber \\
&  + \frac 12 \big(f(x') +\frac{1}{2\sigma(x')}\big)^{-1}\, |u(x',0)|^2 \label{A+}
\end{align} 
for all $x'\in A_+$. 
\end{proof}

%%%%%%%%%%%%%%%%%%%%%%%%%%%%%%%%%%%%%%%%%%%%%%%%%%%%%
\section{Unbounded non-convex domains: an example} \label{sect-unb}

\noindent In this section we give an example of a Hardy inequality on a particular type of an unbounded non-convex domain. Namely, on the complement of a ball.   

\begin{theorem} \label{outsideball}
Let $B_R = \{ x\in\R^n\, : \, |x| < R\}$ and let $B^c_r=\R^n\setminus B_R$ be its complement. Then for any constant $\sigma \geq 0$ the inequality 
\begin{align} 
\int_{B^c_R} |\nabla u(x)|^2\, dx  + \sigma\int_{\partial B_R}
|u(y)|^2\, d\nu(y)  \, & \geq \, \frac 14 \int_{B^c_R} \Big (|x|-R+\frac{1}{2\sigma}\Big)^{-2} |u(x)|^2\, dx \, + 
& \label{hardy:outside} \\
& \quad +\frac{(n-1)(n-3)}{4} \int_{B^c_R} \frac{|u(x)|^2}{|x|^2}\, dx. \nonumber
\end{align}
holds true for all $u\in  H^1(B_R^c)$. 
\end{theorem}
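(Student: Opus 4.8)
The plan is to pass to polar coordinates $x = r\omega$, $r = |x| \ge R$, $\omega \in S^{n-1}$, and reduce the statement to the one-dimensional radial inequality already available in Lemma \ref{lem1}. Following the scheme used in the proof of Theorem \ref{main}, I would first establish the inequality for $u \in C^1(\overline{B_R^c})$ with $u(x)=0$ for $|x|$ large, and then extend to all $u\in H^1(B_R^c)$ by density together with the trace embedding $H^1(B_R^c)\hookrightarrow L^2(\partial B_R)$. In polar coordinates one has $|\nabla u|^2 = |\partial_r u|^2 + r^{-2}|\nabla_\omega u|^2$, the volume element is $r^{n-1}\,dr\,d\omega$, and $d\nu = R^{n-1}\,d\omega$ on $\partial B_R$; the angular term $\int r^{-2}|\nabla_\omega u|^2$ is nonnegative and will simply be discarded.

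The key algebraic step is the substitution $v(r,\omega) = r^{(n-1)/2} u(r,\omega)$, which removes the Jacobian weight from the radial derivative: a direct computation gives $|\partial_r u|^2\, r^{n-1} = \big|\partial_r v - \tfrac{n-1}{2r} v\big|^2$. Expanding the square and integrating the cross term $-\tfrac{n-1}{2r}\partial_r |v|^2$ by parts over $r\in[R,\infty)$ produces exactly three contributions: the flat radial energy $\int_R^\infty |\partial_r v|^2\,dr$, a Hardy term with constant $\frac{(n-1)(n-3)}{4}$ arising from the combination $\big(\tfrac{(n-1)^2}{4}-\tfrac{n-1}{2}\big)\int_R^\infty r^{-2}|v|^2\,dr$, and a boundary term at $r=R$ equal to $\tfrac{n-1}{2}R^{n-2}|u(R,\omega)|^2 \ge 0$. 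Since $r^{-2}|v|^2 = r^{n-3}|u|^2$, the Hardy term integrates back to $\frac{(n-1)(n-3)}{4}\int_{B_R^c}|x|^{-2}|u|^2\,dx$; this is an exact identity valid for every $n$, the constant being carried with its sign (negative for $n=2$, zero for $n=3$).

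It remains to bound $\int_R^\infty |\partial_r v|^2\,dr$ from below. The crucial observation is that the Robin boundary term transforms cleanly, because $|v(R,\omega)|^2 = R^{n-1}|u(R,\omega)|^2$, so $\sigma R^{n-1}|u(R,\omega)|^2 = \sigma |v(R,\omega)|^2$ is precisely the boundary term to which Lemma \ref{lem1} applies. Writing $t = r-R$ and applying Lemma \ref{lem1} to $v$ on $[0,\rho-R]$ (with $\rho$ bounding the support), retaining only the first term on its right-hand side and discarding the two remaining nonnegative terms, yields $\int_R^\infty |\partial_r v|^2\,dr + \sigma |v(R,\omega)|^2 \ge \frac14 \int_R^\infty \big(r-R+\tfrac{1}{2\sigma}\big)^{-2}|v|^2\,dr$. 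Converting back via $|v|^2 = r^{n-1}|u|^2$, dropping the nonnegative $r=R$ boundary term and the angular energy, and integrating over $\omega$ reproduces exactly both terms on the right of \eqref{hardy:outside}.

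The main obstacle I anticipate is not any single estimate but the careful bookkeeping of boundary contributions: one must check that the integration by parts in $v$ generates precisely the nonnegative term $\tfrac{n-1}{2}R^{n-2}|u(R)|^2$ (so that it may be dropped rather than controlled) and that no boundary term survives at $r=\infty$ — which is why reducing first to compactly supported $u$ is convenient. The case $\sigma=0$ should be treated separately, or as the limit $\tfrac{1}{2\sigma}\to\infty$ in which the first weight vanishes and only the $|x|^{-2}$ Hardy term remains, where Lemma \ref{lem1} is trivial. The closing density step on the exterior domain requires a standard truncation argument to reach arbitrary $u\in H^1(B_R^c)$.
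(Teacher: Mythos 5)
Your proof is correct, but it takes a genuinely different route from the paper's. The paper never leaves Cartesian coordinates: it completes the square with an explicit vector field, writing the difference of the two sides of \eqref{hardy:outside} as
$\int_{B_R^c}\big|\nabla u - \frac{\nabla\delta}{2\delta+\frac{1}{\sigma}}\,u + \frac{(n-1)\,u\,x}{2|x|^2}\big|^2dx \ \geq\ 0$,
the verification resting on the two integration-by-parts identities \eqref{byparts-1} and \eqref{final}; both weights appear in a single stroke and Lemma \ref{lem1} is never invoked. You instead separate variables: the substitution $v=r^{(n-1)/2}u$ turns the radial energy into an exact identity that produces the $\frac{(n-1)(n-3)}{4}$ Hardy term (carried with its sign, so $n=2,3$ need no special treatment) plus the nonnegative boundary term $\frac{n-1}{2}R^{n-2}|u(R,\omega)|^2$, and the remaining flat one-dimensional problem is handled by Lemma \ref{lem1} on each ray --- your observation that $\sigma R^{n-1}|u(R,\omega)|^2=\sigma|v(R,\omega)|^2$ matches the lemma's boundary term exactly is the step that makes this work, and your constants check out: $\frac{(n-1)^2}{4}-\frac{n-1}{2}=\frac{(n-1)(n-3)}{4}$. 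The two arguments are of course cousins (Lemma \ref{lem1} is itself proved by an integration-by-parts trick), but yours is modular and runs parallel to the proof of Theorem \ref{main}, and your bookkeeping shows that \eqref{hardy:outside} can be strengthened by explicit additional nonnegative terms, namely the angular energy $\int_{B_R^c} |x|^{-2}|\nabla_\omega u|^2\,dx$ and the boundary term above, which integrates to $\frac{n-1}{2R}\int_{\partial B_R}|u|^2\,d\nu$; the paper's square completion is shorter and keeps all boundary bookkeeping inside the single identity \eqref{byparts-1}, at the price of having to guess the right vector field. Both proofs require the same final density/trace step to reach all of $H^1(B_R^c)$, and both treat $\sigma=0$ as a limiting or trivial case, so there is no essential difference there.
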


\begin{proof}
Without loss of generality we may assume that $u$ is real-valued. Consider first the case $\sigma>0$. 
Let $\delta(x) = |x|-R$. We have $|\nabla \delta(x)|=1$ and $\Delta\delta(x)= \frac{n-1}{|x|}$. Integration by parts then gives
\begin{align}
 \int_{B_R^c} \frac{u(x)\, \nabla u(x) \cdot \nabla\delta(x)}{\delta(x)+\frac{1}{2\sigma}}\, dx& = 
\frac 12 \int_{B_R^c} \left( \frac{1}{(\delta(x)+\frac{1}{2\sigma})^2} -\frac{n-1}{|x|(\delta(x)+\frac{1}{2\sigma})}\right) \, |u(x)|^2\, dx \nonumber\\
& \quad - \sigma \int_{\partial B_R} |u(y)|^2\, d\nu(y),  \label{byparts-1}
\end{align}
and 
\begin{align} \label{final}
& \int_{B_R^c} \frac{u(x)\, \nabla u(x) \cdot x}{|x|^2} \, dx = -\frac{1}{2 R} \int_{\partial B_R} |u(y)|^2\, d\nu(y) -\frac{(n-2)}{2} \int_{B_R^c}  \frac{|u(x)|^2}{|x|^2}\, dx. 
\end{align}
\smallskip
Since $\nabla\delta(x) \cdot x= |x|$,  using \eqref{byparts-1}, \eqref{final} and a straightforward calculation we obtain 
\begin{align*}
& \int_{B_R^c} \Big |\, \nabla u(x) -\frac{\nabla\delta(x)}{2\delta(x)+\frac 1\sigma}\, u(x) +\frac{(n-1)\, u(x)\, x}{2 |x|^2}\, \Big|^2 \, dx \leq \int_{B_R^c} |\nabla u(x)|^2\, dx\, + \\
& \ \  \sigma\!\int_{\partial B_R}\! |u(y)|^2\, d\nu(y) 
 -\frac 14 \int_{B_R^c} \Big (\delta(x)+\frac{1}{2\sigma}\Big)^{-2} |u(x)|^2\, dx-\frac{(n-1)(n-3)}{4} \int_{B_R^c} \frac{|u(x)|^2}{|x|^2}\, dx.
\end{align*}
This proves the Theorem for $\sigma>0$. The case $\sigma=0$ then follows from \eqref{hardy:outside} by monotone convergence. 
\end{proof}

%%%%%%%%%%%%%%%%%%%%%%%%%%%%%%%%%%%%%%%%%%%%%%%%%%%%%%%%

\section{Acknowledgements} 
A partial support from the MIUR-PRINÕ08 grant for the project  ''Trasporto ottimo di massa, disuguaglianze geometriche e funzionali e applicazioni''  (H.K.) is gratefully acknowledged. H.K. would like to thank the Department of Mathematics of the Imperial College London for the hospitality extended to him.

%%%%%%%%%%%%%%%%%%%%%%%%%%%%%%%%%%%%%%%%%%%%%%%%%%%%%%%%%%%%%%%%%%%%%%%%%%%%%%%%%%%%%%%%%%%%


\begin{thebibliography}{9999}

%
\bibitem[Ad]{Ad} R. Adams, {\em Sobolev Spaces}, Elsevier Science Ltd,
Oxford , UK, 2003.
%
\bibitem[A]{A}A. Ancona, \emph{On strong barriers and inequality of Hardy for domains in $\R^n$},
J. London Math. Soc. (2), \textbf{34} (1986), 274--290.
%
\bibitem[A1]{A1} F. G. Avkhadiev, \emph{Hardy Type Inequalities in Higher Dimensions
with Explicit Estimate of Constants}, Lobachevskii J. Math. {\bf 21} (2006), 3--31 (electronic, http://ljm.ksu.ru).
%
\bibitem[A2]{A2} F. G. Avkhadiev,
\emph{Hardy-type inequalities on planar and spatial open sets}, Proceeding of  the
Steklov Institute of Mathematics
{\bf 255} (2006), no.1, 2--12 (translated from Trudy Matem. Inst. V.A. Steklova,
 2006, v.255, 8--18).
%
\bibitem[AW]{AW} F. G. Avkhadiev, K.-J. Wirths,
\emph{Unified Poincar\'{e} and Hardy inequalities with sharp constants
 for convex domains}, Z. Angew. Math. Mech. {\bf 87} (2007),  632--642.
%
\bibitem[B]{B} M. Sh. Birman, 
 \emph{On the spectrum of singular boundary-value problems}
(in Russian) Mat. Sb. \textbf{55} (1961), 125--174; 
English transl. in Amer. Math. Soc. Trans., \textbf{53} (1966), 23--80.
%
\bibitem[BM]{BM} H. Brezis and M. Marcus, {\em Hardy's inequalities revisited}.
Dedicated to Ennio De Giorgi. Ann. Scuola Norm. Sup. Pisa Cl. Sci. (4)
{\bf 25} (1997), no. 1-2 (1998), 217--237.
%
\bibitem[D1]{D1} E.B.Davies,  {\em Heat Kernels and Spectral Theory}, Cambridge
University Press, Cambridge 1989.
%
\bibitem[D2]{D2} E.B.Davies, {\em A review of Hardy inequalities}. The Maz'ya anniversary collection, Vol. 2, Oper. Theory Adv. Appl. Birkh\"auser, Basel, {\bf 110} (1999) 55--67.
%
\bibitem[D3]{D3} E.B.Davies,  {\em Spectral theory and differential operators}, Cambridge
University Press, Cambridge 1995.
%
\bibitem[FMT]{FMT}  S. Fillippas, V. Maz'ya, A. Tertikas,
\emph{Sharp Hardy-Sobolev Inequalities }, C. R. Acad. Sci. Paris
{\bf 339} (2004), no. 7,  483--486.
%
\bibitem[Ha]{had} H.Hadwiger, {\em Altes und Neues \"uber konvexe K\"orper}, Birkh\"auser  Verlag, 1955.
%
\bibitem[HHL]{HHL} M. Hoffmann-Ostenhof, T. Hoffmann-Ostenhof and A. Laptev,
{\em A geometrical version of Hardy's inequalities},  J. Funct.
Anal., \textbf{189} (2002), no 2, 539--548.
%
\bibitem[LS]{LS} A.Laptev and A.Sobolev, {\em Hardy inequalities for simply connected planar domains},
Amer. Math. Soc. Transl. Ser. 2, {\bf 225} (2008),  133--140.
%
\bibitem[MMP]{MMP} M. Markus, V.J. Mizel, and Y. Pinchover: On the best constant for Hardy's inequality in $\R^n$, {\em Trans. Amer. Math. Soc.}, {\bf 350} (1998), 3237--3255.
%
\bibitem[MS]{MS} T. Matskewich and P. E. Sobolevskii: The best possible constant in a generalized Hardy's
inequality for convex domains in $\R^n$, {\em Nonlinear Analysis TMA}, {\bf 28} (1997), 1601--1610.
%
\bibitem[M]{M}
V. G. Maz'{}ya, \emph{Sobolev Spaces}, Springer Verlag, Berlin New York, 1985.
%
\bibitem[LN]{LN} Y.Li and L.Nirenberg,
\emph{The distance function to the boundary. Finsler geometry and
the singular set of viscosity solutions of some Hamilton-Jacoby
equations},  Comm. Pure Appl. Math., \textbf{58} (2005), 85-146.
%
\bibitem[T]{T} J. Tidblom, \emph{A geometrical version of Hardy's inequality for
$W^{1,p}(\Omega)$},
Proc. Amer. Math. Soc.,  \textbf{132} (2004), no. 8, 2265--2271.
%
\end{thebibliography}
\end{document}